\theoremstyle{plain}
\newtheorem{theorem}{Theorem}[section]
\newtheorem{proposition}[theorem]{Proposition}
\theoremstyle{definition}
\newtheorem{definition}[theorem]{Definition}
\newcommand{\N}{{\mathbb{N}}}
\newcommand{\orb}{{\mathcal{O}}}
\begin{document}

\title{Devaney's chaos and eventual sensitivity}

\author{Alica MILLER\\ 
Department of Mathematics\\ 
University of Louisville\\
Louisville, KY 40292, USA\\ \medskip
{Email: \tt alica.miller@louisville.edu}
}
\date{}
\maketitle
\begin{abstract}  
We give an equivalent definition of Devaney chaotic semiflow in terms of eventual sensitivity, the notion recently introduced by C.~Good, R.~Leek, and J.~Mitchell. As a consequence, we prove a version of Auslander-Yorke dichotomy for the case of not necessarily compact phase spaces. Finally, we raise some questions about the relation between Devaney chaoticity and the notion of topological sensitivity, which was recently introduced by C.~Good and C.~Mac\'ias.

\smallskip
\textit{Keywords:} Auslander-Yorke dichotomy; Devaney's chaos; eventually sensitive; Good-Mac\'ias sensitive; periodic point; sensitive; topologically transitive. 

\smallskip 
\textit{2010 AMS Mathematics Subject Classification.} Primary:  37B20, 37B99, 54H20.
\end{abstract}

\section{Introduction}
The notion of a Devaney chaotic cascade $(X,f)$ on a metric space $X$ was introduced by R.~Devaney in 1989 in \cite{d}: a cascade $(X,f)$ is said to be Devaney chaotic if it is topologically transitive, has a dense set of periodic points, and is sensitive on initial conditions. In 1992 J.~Banks, J.~Brooks, G.~Cairns, G.~Davis, and P.~Stacey proved in \cite{bbcds} that every topologically transitive cascade on an infinite metric space $X$, which has a dense set of periodic points, is necessarily sensitive on initial conditions. This was quite surprising since the sensitivity was considered to be the main ingredient of Devaney's chaos. 

In this paper we show that a semiflow $(T,X)$ on a metric space $X$, with an arbitrary acting commutative topological monoid $T$ satisfies topological transitivity, density of periodic points, and sensitivity if and only if it is non-minimal, topologically transitive, and has a dense set of periodic points. The proof of one of the two directions is an adaptation of the proof of \cite[Theorem]{bbcds}. In this way we get two equivalent definitions of Devaney's chaoticity of a semiflow $(T,X)$. Our main theorem contains, besides these two definitions, a third one as well. Namely, in the 2019 paper \cite{glm} C.~Good, R.~Leek, and J.~Mitchell introduced the notion of an eventually sensitive cascade on a metric or compact Hausdorff space $X$. We use this notion in the context of a semiflow $(T,X)$ on a metric space $X$ and show in our main theorem that the previous two definitions of Devaney's chaoticity of $(T,X)$ are equivalent with $(T,X)$ being topologically transitive, having a dense set of periodic points, and being eventually sensitive. As a consequence we prove a version of Auslander-Yorke dichotomy, which is kind of analogous to  Generalized Auslander-Yorke Dichotomy II from \cite{glm},  but for the case when the phase space $X$ is not necessarily compact.

\section{Notation and preliminaries}
In this paper $T$ will always denote a {\sl noncompact} abelian topological monoid, written additively, whose identity element is $0$.
A subset $A$ of $T$ is called {\it syndetic} if there is a compact subset $K$ of $T$ (called a {\it corresponding compact} for $A$) such that $(t+K)\cap A\ne\emptyset$ for every $t\in T$.

\smallskip
All topological spaces in the paper are assumed to be Hausdorff. If $x$ is a point of a topological space $X$, we denote by $\mathcal{N}(x)$ the set of neighborhoods of $x$. If $X$ is a metric space, the metric will always be denoted by $d$. If $x$ is a point in a metric space $X$ and $r>0$, the {\it open ball} with center $x$ and radius $r$ is denoted by $B(x,r)$.

\smallskip 
A jointly continuous monoid  action $\phi:T\times X\to X$ of a topological monoid $T$ on a topological space $X$ is called a {\it semiflow} and denoted by $(T,X,\phi)$ or by $(T,X)$. The element $\phi(t,x)$ will be denoted by $t.x$ or $tx$, so that the defining conditions for a monoid action have the form
\begin{align*}
s.(t.x) &=(s+t).x,\\
0.x &=x,
\end{align*}
for any $s,t\in T$ and $x\in X$. The space $X$ is called the {\it phase space}. For any $x\in X$ the set $Tx=\{tx\;|\;t\in T\}$ is called the {\it orbit} of $x$ and is also denoted by $\orb(x)$. The set $\overline{Tx}$ is called the {\it orbit-closure of $x$}. The semiflow $(T,X)$ is called {\it minimal} (MIN) if $\overline{Tx}=X$ for every $x\in X$, and {\it non-minimal} (NMIN) otherwise. For $Y\subseteq X$ and $t\in T$ we denote $tY=\{ty\;:\;y\in Y\}$. If $f:X\to X$ is a continuous map, the semiflow $(\N_0, X)$, defined by $n.x=f^n(x)$ for all $n\in\N_0$ and $x\in X$, is called a {\it cascade}. Here $\N_0=\{0,1,2,\dots\}$ is equipped with the discrete topology.

\smallskip
A semiflow $(T,X)$ is called {\it topologically transitive} (TT) if for any nonempty open subsets $U,V$ of $X$ there is a $t\in T$ such that $tU\cap V\neq \emptyset$. 

\smallskip
A point $x$ in a semiflow $(T,X)$ is said to be {\it periodic} if its fixer $\mathrm{Fix}(x)=\{t\in T:tx=x\}$ is a syndetic sub\-monoid of $T$.

\medskip
A semiflow $(T,X)$ on a metric space $X$ is said to be {\it sensitive on initial conditions}, or {\it sensitive} (S),  if there is a number $c>0$ (called a {\it sensitivity constant}) such that
\[(\forall \,x\in X)\;(\forall\, \varepsilon>0)\;(\exists\, y\in B(x,\varepsilon))\; (\exists\, t\in T)\; d(tx,ty)\ge c.\]

\smallskip
The next definition was introduced in \cite{glm}. A semiflow $(T,X)$ on a metric space $X$ is said to be {\it eventually sensitive on initial conditions}, or {\it eventually sensitive} (ES), if there is a number $c>0$ (called an {\it eventual sensitivity constant}) such that 
\[(\forall\, x\in X)\;(\forall \,\varepsilon>0)\; (\exists\, t_0, t\in T)\; (\exists \,y\in B(t_0x,\varepsilon))\; d(t(t_0x),ty)\ge c.\]
If a semiflow $(T,X)$ is sensitive, it is obviously eventually sensitive. The converse is not true. The importance of eventual sensitivity is discussed in \cite{glm}, where, as well, an example of an eventually sensitive cascade which is not sensitive was provided.

\smallskip
Let $(T,X)$ be a semiflow on a metric space $X$ and $A\subseteq T$. We say that $A$ {\it acts equicontinuously at a point $x_0\in X$} if
\[(\forall\,\varepsilon>0)\;(\exists\, \delta>0)\; (\forall \,x\in B(x_0,\delta))\; (\forall\, t\in T)\; d(tx_0, tx)<\varepsilon\]
and that {\it $A$ acts uniformly equicontinuously on $X$} if
\[(\forall\,\varepsilon>0)\;(\exists\, \delta>0)\; (\forall\, x_1, x_2\in X)\; (\forall\, t\in T)\;  d(x_1, x_2)<\delta \Rightarrow d(tx_1, tx_2)<\varepsilon.\]
We say that the semiflow $(T,X)$ is {\it equicontinuous} (EQ) if $T$ acts equiconti\-nuously at every point of $X$
and that $(T,X)$ is {\it unformly equicontinuous} (UEQ) if $T$ acts uniformly equiconti\-nuously on $X$.

\smallskip
Part (a) of the next proposition is from \cite{km}, part (b) follows easily from (a) using the compactness of $X$.

\begin{proposition}\label{eq_prop}
Let $(T,X)$ be a semiflow on a metric space $X$. Let $A$ be a relatively compact subset of $T$. Then:

(a) $A$ acts equiconti\-nuously at every point of $X$.

(b) If $X$ is compact, $A$ acts unformly equicontinuously on $X$.
\end{proposition}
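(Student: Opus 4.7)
My plan is to exploit joint continuity of the action $\phi$ together with compactness of $\overline{A}$, via a tube-lemma style argument.

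For part (a), I would fix $x_0 \in X$ and $\varepsilon > 0$ and work with the continuous function $f \colon T \times X \to \R$ defined by $f(t, x) = d(tx_0, tx)$, which is continuous as the composition of $\phi$ and $d$. Since $f(t, x_0) = 0$ for every $t \in T$, joint continuity of $f$ at $(t, x_0)$ supplies, for each $t \in \overline{A}$, open neighborhoods $U_t \subseteq T$ of $t$ and $W_t \subseteq X$ of $x_0$ on which $f < \varepsilon$. I would then extract a finite subcover $U_{t_1}, \dots, U_{t_n}$ of the compact set $\overline{A}$, set $W = \bigcap_{i=1}^n W_{t_i}$, and choose $\delta > 0$ with $B(x_0, \delta) \subseteq W$. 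For any $t \in A$ and $x \in B(x_0, \delta)$ one has $t \in U_{t_i}$ for some $i$, so $(t, x) \in U_{t_i} \times W_{t_i}$ and hence $d(tx_0, tx) < \varepsilon$, as required.

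Part (b) I would deduce from (a) by a standard finite-cover argument on $X$. Given $\varepsilon > 0$, applying (a) with $\varepsilon/2$ at each point $x_0 \in X$ produces a $\delta_{x_0} > 0$ yielding $\varepsilon/2$-equicontinuity at $x_0$. I would cover $X$ by the halved balls $B(x_0, \delta_{x_0}/2)$, extract a finite subcover $B(x_i, \delta_{x_i}/2)$ for $i = 1, \dots, m$, and put $\delta = \tfrac{1}{2}\min_i \delta_{x_i}$. For $y, z \in X$ with $d(y, z) < \delta$, choose $j$ with $y \in B(x_j, \delta_{x_j}/2)$; the triangle inequality puts $z \in B(x_j, \delta_{x_j})$ as well, and two applications of (a) together with another triangle inequality give $d(ty, tz) < \varepsilon$ for every $t \in A$.

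The main point to be careful about is that $T$ is only a topological monoid and is not assumed metrizable, so one cannot form $\delta$-balls around points of $T$ or appeal to uniform continuity of $\phi$ on a product of metric spaces. The argument must therefore proceed through abstract open neighborhoods of the points of $\overline{A}$ and rely on compactness of $\overline{A}$ to produce the finite subcover. Beyond that caveat, both parts are routine.
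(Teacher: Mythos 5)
Your proof is correct. The paper itself gives no argument for this proposition (part (a) is quoted from \cite{km}, and part (b) is merely noted to ``follow easily from (a) using the compactness of $X$''), and your tube-lemma argument for (a) together with the finite-subcover/triangle-inequality deduction for (b) is exactly the standard route being alluded to --- indeed your (a) proves the slightly stronger statement with $t$ ranging over all of $\overline{A}$, and your caution about $T$ being a non-metrizable topological monoid is well placed.
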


\medskip
The paper is self-contained, i.e., all the notions used in the paper are defined in it. The reader can also consult the standard reference \cite{w} and papers \cite{km, mm} for additional information. 

%------------------------------------
\section{Results}
The next proposition is from our paper \cite{mm}, written jointly with C.~Money. We include the proof for the sake of completeness.

\begin{proposition}\label{periodic_orbits}
Let $x\in X$ be a periodic point in a semiflow $(T,X)$. Then:

(a) For every $x'\in\orb(x)$, $\orb(x')=\orb(x)$.

(b) For every periodic point $x'\in X$, either $\orb(x)=\orb(x')$ or $\orb(x)\cap \orb(x')=\emptyset$.

(c) For every $x'\in \orb(x)$, $\mathrm{Fix}(x')=\mathrm{Fix}(x)$. In particular, all points from $\orb(x)$ are periodic.

(d) If $K$ is a compact subset of $T$ corresponding to the syndetic set $\mathrm{Fix}(x)$, then $\orb(x)=K.x$. In particular, $\orb(x)$ is a compact subset of $X$.

(e) Suppose $X$ is a metric space and $X=\orb(x)$. Then $(T,X)$ is a compact, minimal, uniformly equicontinuous semiflow.
\end{proposition}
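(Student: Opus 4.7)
The proof uses the assumed commutativity of $T$ and the syndeticity of $\mathrm{Fix}(x)$ repeatedly; throughout, let $K\subseteq T$ denote a compact set corresponding to $\mathrm{Fix}(x)$, which exists because $x$ is periodic. The five parts are established essentially in order, with (a) used in (b) and (c), and (c) and (d) used in (e).

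For (a), writing $x'=sx$, the inclusion $\orb(x')\subseteq\orb(x)$ is immediate from $tx'=(t+s)x\in Tx$. For the reverse, syndeticity applied to $s$ yields $k\in K$ with $s+k\in\mathrm{Fix}(x)$, so $kx'=k(sx)=(s+k)x=x$ by commutativity, placing $x\in\orb(x')$ and hence $\orb(x)=Tx\subseteq\orb(x')$. Part (b) then follows by applying (a) twice: if $y\in\orb(x)\cap\orb(x')$, then $\orb(y)$ coincides with each of $\orb(x)$, $\orb(x')$. For (c), commutativity gives $\mathrm{Fix}(x)\subseteq\mathrm{Fix}(x')$ from $tx'=t(sx)=s(tx)=sx=x'$ whenever $t\in\mathrm{Fix}(x)$; using (a) to write $x=kx'$, the same computation run backwards yields the reverse inclusion. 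The ``in particular'' statement is then immediate, since $\mathrm{Fix}(x')$ inherits syndeticity from $\mathrm{Fix}(x)$.

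The main obstacle is part (d). The inclusion $Kx\subseteq\orb(x)$ is trivial, but $\orb(x)\subseteq Kx$ is subtle: for $t\in T$, syndeticity applied to $t$ produces $k\in K$ with $t+k\in\mathrm{Fix}(x)$, which by commutativity only yields $k(tx)=x$ --- the ``wrong'' direction, telling us that $y:=tx$ maps back to $x$ under $k$ rather than locating $y$ itself inside $Kx$. The plan to get around this is to invoke syndeticity a \emph{second} time, now applied to $k$: one obtains $k'\in K$ with $k+k'\in\mathrm{Fix}(x)=\mathrm{Fix}(y)$, using (c) for the last equality. Applying $k'$ to both sides of $ky=x$ then gives $k'x=k'(ky)=(k+k')y=y$, so $tx=y=k'x\in Kx$. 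Compactness of $\orb(x)$ follows as the continuous image of the compact set $K$.

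For (e), (d) immediately gives $X=\orb(x)=Kx$ compact, while (a) yields minimality because $\orb(y)=\orb(x)=X$ for every $y\in X$. For uniform equicontinuity the plan is to reduce the $T$-action to the $K$-action and invoke Proposition~\ref{eq_prop}(b). The key observation is that whenever $t\in T$ and $k\in K$ satisfy $tx=kx$ (such a $k$ exists by (d)), one in fact has $ty=ky$ for \emph{every} $y\in X$: writing $y=t'x$ and using commutativity, $ty=(t+t')x=t'(tx)=t'(kx)=(k+t')x=k(t'x)=ky$. Thus the $T$-action on $X$ agrees pointwise with the $K$-action, so the uniform equicontinuity of $K$ provided by Proposition~\ref{eq_prop}(b) transfers verbatim to $T$.
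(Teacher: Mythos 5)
Your proof is correct, and parts (a)--(d) follow essentially the paper's own route: (a) via one application of syndeticity to get $k'x'=x$, (b) as a double application of (a), (c) by commutativity plus the symmetry supplied by (a), and (d) by a second syndeticity application to the returning element (you package this as two syndeticity applications inside (d), where the paper imports the first one from (a), but it is the same total argument). In (e), however, you take a genuinely different and cleaner path to uniform equicontinuity. The paper invokes Proposition \ref{eq_prop}(b) for the compact set $K+K$, and then, for given $x_1,x_2,t$, runs a bookkeeping computation (via $s=t+k\in\mathrm{Fix}(x_1)$ and the element $2k+k'$) to conclude $tx_1=(k'+k)x_1$ and $tx_2=(k'+k)x_2$ with $k'+k\in K+K$. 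You instead isolate the structural fact that for each $t\in T$ there is a \emph{single} $k\in K$ acting identically to $t$ on \emph{all} of $X$: choose $k$ with $tx=kx$ by (d), and propagate by commutativity along $y=t'x$, which covers every $y$ since $X=\orb(x)=Tx$. Thus the family of transition maps of $T$ on $X$ coincides with that of $K$, and Proposition \ref{eq_prop}(b) applied to $K$ alone transfers verbatim, with no need for $K+K$. The paper's computation in effect establishes this same coincidence, but only pointwise at $x_1$ and $x_2$ and with the larger compact set; your version makes the mechanism explicit and is both shorter and slightly stronger. All steps check out, including the two uses of (c) (for $\mathrm{Fix}(y)=\mathrm{Fix}(x)$ in (d)) and the hypothesis verification for Proposition \ref{eq_prop}(b) ($K$ compact, $X=Kx$ compact).
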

\begin{proof}
Let $S=\text{Fix}(x)$ and let $K$ be a compact subset of $T$ whose every translate $t+K$,\, $t\in T$, intersects $S$.

(a) Let $x'\in\orb(x)$. Then $x'=t'.x$ for some $t'\in T$. Then there is a $k'\in K$ such that $t'+k'\in \mathrm{Fix}(x)$. Hence $k'.x'=k'.(t'.x)=(t'+k').x=x$, so that $x\in\orb(x')$. Hence $\orb(x)\subseteq \orb(x')$. Since also $x'\in\orb(x)$, we have $\orb(x')\subseteq \orb(x)$. Thus $\orb(x')=\orb(x)$.

(b) Follows from (a).

(c) Let $x'\in\orb(x)$, i.e., $x'=t'.x$ for some $t'\in T$. Then for any $s\in \mathrm{Fix}(x)$,\, $s.x'=s.(t'.x)=t'.(s.x)=t'.x=x'$, hence $\mathrm{Fix}(x)\subseteq \text{Fix}(x')$. By (a), $\orb(x')\subseteq \orb(x)$, in particular $x\in \orb(x')$. Hence, by symmetry, $\mathrm{Fix}(x')\subseteq\mathrm{Fix}(x)$. Thus $\mathrm{Fix}(x')=\mathrm{Fix}(x)$. In particular, $x'$ is periodic (having a syndetic fixer).

(d) Let $x'\in\orb(x)$. By (a), $x\in\orb(x')$, i.e., $t.x'=x$ for some $t\in T$. There is a $k\in K$ such that $t+k\in \mathrm{Fix}(x)$. Since, by (c), $\mathrm{Fix}(x')=\mathrm{Fix}(x)$, we have $t+k\in \mathrm{Fix}(x')$ as well. Hence $x'=(t+k).x'=k.(t.x')=k.x$. Thus $\orb(x)=K.x$ and, in particular, $\orb(x)$ is a compact set. 

(e) By (a) and (d), $(T,X)$ is a compact minimal semiflow. $K+K$ is a compact subset of $T$ and, since $X$ is also compact, it acts uniformly equicontinuously on $X$ by Proposition \ref{eq_prop}(b). Fix an $\varepsilon>0$. Let $\delta=\delta(\varepsilon, K+K)>0$ be such that 
\begin{equation}\label{eqn1}
(\forall\,x_1, x_2\in X)\,(\forall\,r\in K+K)\;d(x_1,x_2)<\delta \Rightarrow d(r.x_1, r.x_2)<\varepsilon. \\
\end{equation}
\noindent Fix $x_1, x_2\in X$ such that $d(x_1, x_2)<\delta$. Fix a $t \in T$. By (c), $\mathrm{Fix}(x_1)=\mathrm{Fix}(x_2)=\mathrm{Fix}(x)$. Let $k\in K$ and $s\in \mathrm{Fix}(x_1)$ be such that $t+k=s$. Since, by (b), $X=K.(k.x_1)$, there is a $k'\in\ K$ such that $t.x_1=k'.(k.x_1)=(k'+k).x_1$.
Hence $(2k+k').x_1=(t+k).x_1=s.x_1=x_1$ and so $2k+k'\in \mathrm{Fix}(x_1)=\mathrm{Fix}(x_2)$. Hence, by (c), $(2k+k').(t.x_2)=t.x_2$, whence $(k'+k).x_2=t.x_2$. Now by (\ref{eqn1}), $d(t.x_1, t.x_2)=d((k'+k).x_1, (k'+k).x_2)<\varepsilon$. 
Hence $(T,X)$ is uniformly equicontinuous.
\end{proof}

Next is the main theorem of the paper. The part $(a) \Rightarrow$ (b) generalizes the main theorem from \cite{bbcds} and the proof of it is an adaptation of the proof from  \cite{bbcds}.
The novelty is the part (c).

\begin{theorem}\label{main_thm}
Let $(T,X)$ be a semiflow on a metric space $X$. The following are equivalent:

(a) $(T,X)$ is (TT), (DPP), (NMIN);

(b) $(T,X)$ is (TT), (DPP), (S);

(c) $(T,X)$ is (TT), (DPP), (ES).
\end{theorem}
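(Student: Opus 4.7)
The plan is to prove the cycle (a)$\Rightarrow$(b)$\Rightarrow$(c)$\Rightarrow$(a). The implication (b)$\Rightarrow$(c) is immediate: take $t_0=0$ in the definition of (ES). For (c)$\Rightarrow$(a), I would argue by contradiction: suppose (TT), (DPP), and (ES) hold but $(T,X)$ is minimal. By (DPP) pick a periodic point $p$; Proposition~\ref{periodic_orbits}(d) makes $\orb(p)$ compact and hence closed, so minimality forces $X=\orb(p)$. Proposition~\ref{periodic_orbits}(e) then yields that $(T,X)$ is uniformly equicontinuous, which contradicts (ES): given any proposed constant $c>0$, uniform equicontinuity provides $\delta>0$ with $d(u,v)<\delta\Rightarrow d(tu,tv)<c$ for every $t\in T$, so for $\varepsilon=\delta$ no choice of $t_0,t\in T$ and $y\in B(t_0.x,\varepsilon)$ can make $d(t(t_0.x),ty)\ge c$. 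Hence $(T,X)$ is (NMIN).

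The main work is (a)$\Rightarrow$(b), adapting \cite{bbcds}. First I would produce two periodic points with disjoint orbits: if every periodic point shared a single orbit $\orb(p)$, then by (DPP) and the compactness of $\orb(p)$ (Proposition~\ref{periodic_orbits}(d)) one would have $\orb(p)=X$, and Proposition~\ref{periodic_orbits}(a) would then make every orbit equal to $X$, contradicting (NMIN). So choose periodic $p_0,q_0$ with $\orb(p_0)\cap\orb(q_0)=\emptyset$ (via Proposition~\ref{periodic_orbits}(b)); since both orbits are compact, $D:=d(\orb(p_0),\orb(q_0))>0$, and one declares the sensitivity constant to be $c:=D/8$.

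Now fix $x\in X$ and $\varepsilon\in(0,c)$. The triangle inequality forces $d(x,\orb(p_0))+d(x,\orb(q_0))\ge 8c$, so one of the two distances is $\ge 4c$; say $d(x,\orb(q_0))\ge 4c$, and fix any $q\in\orb(q_0)$. By (DPP) pick a periodic $p\in B(x,\varepsilon)$, and let $K$ be a compact subset of $T$ corresponding to the syndetic set $\mathrm{Fix}(p)$. Proposition~\ref{eq_prop}(a) yields $\delta>0$ such that $d(k.q,k.z)<c$ whenever $z\in B(q,\delta)$ and $k\in K$. Applying (TT) to $B(x,\varepsilon)$ and $B(q,\delta)$ produces $t\in T$ and $z\in B(x,\varepsilon)$ with $t.z\in B(q,\delta)$; choose $k\in K$ with $s:=t+k\in\mathrm{Fix}(p)$. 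Then $s.p=p$, while $s.z=k.(t.z)$ satisfies $d(k.q,s.z)<c$. Since $k.q\in\orb(q_0)$ is at distance at least $4c$ from $x$, we get $d(x,s.z)\ge 3c$, and since $d(x,s.p)=d(x,p)<c$, the triangle inequality gives $d(s.p,s.z)>2c$; one more application of the triangle inequality at $s.x$ shows $d(s.x,s.p)\ge c$ or $d(s.x,s.z)\ge c$. Since both $p,z\in B(x,\varepsilon)$, the appropriate one is the $y$ required by (S).

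The main obstacle is the alignment step. In the cascade proof of \cite{bbcds} one simply replaces $t$ by a large multiple of the period of $p$ and controls the finitely many iterates $f,f^2,\dots,f^{n-1}$ via their individual continuity; for a general semiflow one must instead convert the time $t$ produced by (TT) into an element $s\in\mathrm{Fix}(p)$ by adding an appropriate $k\in K$, and then absorb the cost of this modification using the equicontinuity of the compact set $K$ at $q$. This is why $\delta$ has to be fixed before (TT) is invoked, and why Proposition~\ref{eq_prop}(a) does the real work in the monoid setting.
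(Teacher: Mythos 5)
Your proposal is correct and takes essentially the same approach as the paper: the same adaptation of the Banks et al.\ argument for (a)$\Rightarrow$(b), with the compact set $K$ corresponding to $\mathrm{Fix}(p)$ handled via Proposition~\ref{eq_prop}(a), and the same contradiction for (c)$\Rightarrow$(a) via Proposition~\ref{periodic_orbits} (minimality forcing $X=\orb(p)$, hence uniform equicontinuity, which is incompatible with (ES)). The only differences are expository: you spell out why two periodic points with disjoint orbits exist and why $d(\orb(p_0),\orb(q_0))>0$, points the paper leaves implicit.
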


\begin{proof}
\underbar{(a) $\Rightarrow$ (b):} Suppose (a) holds.

{\bf Claim.} There exists a number $c>0$ such that for every $x\in X$ there is a periodic point $q\in X$ with $d(x, \orb(q))\ge 4c$.

\underbar{Proof of Claim.} Let $q_1, q_2$ be two periodic points with disjoint orbits. (They exist by (DPP), (NMIN), and Proposition \ref{periodic_orbits}.) Let $\displaystyle{d(\orb(q_1), \orb(q_2))=8c}$. Then by the triangle inequality for every $x\in X$ we have either $d(x, \orb(q_1))\ge 4c$, or $d(x, \orb(q_2))\ge 4c$. Claim is proved.

Let $c$ be as in Claim. We will show that $(T,X)$ is  sensitive with a sensitivity constant $c$. Let $x\in X$. Let $U$ be any neighborhood of $x$ and let $U_0=U\cap B(x,c)$. By (DPP) there is a periodic point $p\in U_0$. By Claim there is a periodic point $q$ such that $d(x, \orb(q))\ge 4c$. The fixer of $p$, $\mathrm{Fix}(p)$, is a syndetic subset of $T$. Let $K$ be a corresponding compact. By Proposition \ref{eq_prop}(a), $K$ acts equicontinuously at $q$. Hence there is a neighborhood $V$ of $q$ such that 
\begin{equation}\label{eqn2}
(\forall\, v\in V)\; (\forall\, k\in K)\; d(k.v, k.q)<c.
\end{equation}
By (TT), there is a point $y\in U_0$ and $t\in T$ such that $ty\in V$. For this $t$ let $k\in K$ be such that $t+k\in\mathrm{Fix}(p)$. Now,
\begin{align*}
d(x, (t+k).p) &= d(x,p)<c,\\
d((t+k).y, \orb(q)) &\le d(k.(ty), k.q))<c,
\end{align*}
the second inequality by (\ref{eqn2}). Hence, since $d(x, \orb(q))\ge 4c$, by the triangle inequality 
$d((t+k).p, (t+k).y)\ge 2c$. Hence, again by the triangle inequality,
either $d((t+k).x, (t+k).p)\ge c$, or $d((t+k).x, (t+k).y)\ge c$. Thus $(T,X)$ is sensitive.

\underbar{(b) $\Rightarrow$ (c):} Clear.

\underbar{(c) $\Rightarrow$ (a):} Suppose to the contrary, i.e., that $(T,X)$ is (TT), (DPP), (ES), and (MIN). Let $c>0$ be an eventual sensitivity constant. Let $p$ be a periodic point of $(T,X)$. Let $K$ be a compact corresponding to the syndetic subset $\mathrm{Fix}(p)$ of $T$. Then by Proposition \ref{periodic_orbits}, $\orb(p)=K.p$, a compact set, so that $X=\overline{Tp}=\orb(p)$. Hence, again by Proposition \ref{periodic_orbits}, $(T,X)$ is uniformly equicontinuous. Then for $c$ we have
\begin{equation}\label{eqn3}
(\exists\, \delta>0)\;(\forall\, x_1, x_2\in X)\; (\forall \,t\in T)\; d(x_1, x_2)<\delta \Rightarrow d(tx_1, tx_2)<c.
\end{equation}
Fix an $x\in X$. For these $\delta$ and $x$ let $t_0\in T$ be such that
\begin{equation}\label{eqn4}
(\exists\,y\in B(t_0x, \delta))\;(\exists\, t\in T)\;d(t(t_0x), ty)\ge c.
\end{equation}
But now (\ref{eqn4}) contradicts (\ref{eqn3}), so $(T,X)$ cannot be minimal.

The theorem is proved.
\end{proof}

The next theorem is a version of Auslander-Yorke dichotomy, which is in a way analogous to  Generalized Auslander-Yorke Dichotomy II from \cite{glm},  but for the case when the phase space $X$ is not necessarily compact.

\begin{theorem}
Let $(T,X)$ be a semiflow on a metric space $X$, which is (TT) and (DPP). Then it satisfies one and only one of the following two conditions:

(1) $(T,X)$ is (UEQ);

(2) $(T,X)$ is (ES).
\end{theorem}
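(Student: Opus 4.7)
The plan is to split the dichotomy into two separate claims: first, that (UEQ) and (ES) cannot hold simultaneously; second, that at least one of them holds. The second direction is where Theorem \ref{main_thm} does most of the work.

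For mutual exclusivity, I would proceed by contradiction: assume both (UEQ) and (ES) hold, and let $c>0$ be an eventual sensitivity constant. Applying uniform equicontinuity with $\varepsilon = c$ produces a $\delta>0$ such that $d(x_1,x_2)<\delta$ implies $d(tx_1,tx_2)<c$ for every $t\in T$ and every $x_1,x_2\in X$. But (ES), applied with this same $\delta$ and any $x\in X$, yields $t_0,t\in T$ and $y\in B(t_0x,\delta)$ with $d(t(t_0x),ty)\ge c$, which contradicts the previous line (taking $x_1 = t_0 x$, $x_2 = y$). Thus at most one of (UEQ), (ES) holds.

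For the existence direction, assume $(T,X)$ is not (ES). Since $(T,X)$ is (TT) and (DPP), the contrapositive of the equivalence (a)$\Leftrightarrow$(c) in Theorem \ref{main_thm} tells us that $(T,X)$ must fail (NMIN), i.e., $(T,X)$ is minimal. By (DPP) there exists at least one periodic point $p\in X$. By Proposition \ref{periodic_orbits}(d), $\orb(p)$ is compact, hence closed in $X$, so $\overline{Tp}=\orb(p)$. Minimality then forces $X=\orb(p)$. At this point Proposition \ref{periodic_orbits}(e) immediately gives that $(T,X)$ is uniformly equicontinuous, establishing (UEQ).

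I do not foresee a real obstacle: once Theorem \ref{main_thm} is in hand, the only nontrivial ingredient is the reduction ``minimal plus a single periodic point implies $X$ is a single compact orbit,'' which is handled by parts (d) and (e) of Proposition \ref{periodic_orbits}. The mildly delicate point to double-check is that the (ES) definition (which starts by advancing $x$ to $t_0x$) is genuinely incompatible with the uniform $\forall t$ estimate in (UEQ); but since the (UEQ) estimate holds for \emph{all} $t\in T$ and \emph{all} pairs $x_1,x_2$ at distance $<\delta$, applying it to $t_0x$ and any nearby $y$ immediately blocks the inequality $d(t(t_0x),ty)\ge c$.
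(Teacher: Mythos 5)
Your proposal is correct and uses exactly the paper's ingredients: the direct $\delta$-versus-$c$ contradiction for incompatibility of (UEQ) and (ES), Theorem \ref{main_thm} to pass between (ES) and non-minimality, and Proposition \ref{periodic_orbits}(d),(e) to conclude $X=\orb(p)$ and hence (UEQ). The only difference is organizational --- you split the dichotomy into ``at most one'' plus ``at least one,'' while the paper runs the two exhaustive cases $X=\orb(p)$ and $X\ne\orb(p)$ --- which is a cosmetic reshuffling of the same argument.
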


\begin{proof}
Let $p$ be a periodic point of $(T,X)$. There are two cases possible, excluding each other: $X=\orb(p)$ and $X\ne\orb(p)$.

\underbar{First case: $X=\orb(p)$.} Then by Proposition \ref{periodic_orbits}, $(T,X)$ is (MIN), hence, by Theorem \ref{main_thm}, it is not (ES). Also, again by Proposition \ref{periodic_orbits}, it is (UEQ). 

\underbar{Second case: $X\ne\orb(p)$.} Then $(T,X)$ is (NMIN), hence, by Theorem \ref{main_thm}, it is (ES). Let $c$ be an eventual sensitivity constant. Suppose $(T,X)$ is (UEQ). Let $\delta>0$ be such that 
\begin{equation}\label{eq5}
(\forall\, x_1, x_2\in X)\; (\forall\, t\in T)\;  d(x_1, x_2)<\delta \Rightarrow d(tx_1, tx_2)<c.
\end{equation}
Fix an $x\in X$. By (ES), there are $t_0, t\in T$ and $y\in B(t_0x, \delta)$ such that $d(t(t_0x), ty)\ge c$. This contradicts to (\ref{eq5}). Thus $(T,X)$ is not (UEQ). 

The theorem is proved.
\end{proof}

%----------------------------------CONCLUSION
\section{Conclusion}

Theorem \ref{main_thm} can be easily stated and proved in the more general context of uniform spaces. One also notices that Theorem \ref{main_thm} makes possible to define Devaney's chaos for the case of topological phase spaces $X$.

\begin{definition} \label{DC_top}
We say that a semiflow $(T,X)$ (on a topological space $X$) is {\it Devaney chaotic} if it is (TT), (DPP), and (NMIN).
\end{definition}

Since the other two definitions of Devaney's chaos (from Theorem \ref{main_thm}) can only be given in the context of uniform spaces (and metric ones, in particular), one can ask the following {\bf question}: {\it Suppose that a semiflow $(T,X)$ on a topological space $X$ is (TT), (DPP), and (NMIN). Is the topological space $X$ necessarily uniformizable?}

If it is, then in any uniform structure $\mathcal{U}$ on $X$, $(T,X)$ would be sensitive (by the uniform space version of Theorem \ref{main_thm}) and that would show that Devaney's chaos is a uniform and metric space notion and that sensitivity (defined for uniform spaces) is its essential ingredient. If it is not, then one can consider the following alternative definition of sensitivity that works for topological spaces, introduced in 2018 in \cite{gm} by C.~Good and C.~Mac\'ias.

\begin{definition}\label{GM-sensitivity}
We say that a semiflow $(T,X)$ on a topological space $X$ is {\it Good-Mac\'ias sensitive on initial conditions}, or {\it Good-Mac\'ias sensitive} (GMS), if there exists a finite open cover $\mathcal{U}$ of $X$ such that
\[(\forall\, x\in X)\;(\forall \,U\in\mathcal{N}(x))\;(\exists\, y\in U)\;(\exists\,t\in T)\;(\forall \,V\in\mathcal{U})\; |\{tx, ty\}\cap V|\le 1.\] 
\end{definition}
So we can now consider the following {\bf question}: {\it Suppose that a semiflow $(T,X)$ on a topological space $X$ is (TT), (DPP), and (NMIN). Is $(T,X)$ necessarily Good-Mac\'ias sensitive?}

Answering the previous two questions would give a better understan\-ding of the relation between Devaney's chaoticity and sensitivity of a general semiflow $(T,X)$. One may also consider the following {\bf question}: {\it Is it possible to define some kind of (Good-Mac\'ias) topological eventual sensitivity and what would be the relation between Devaney's chaoticity and that topological eventual sensitivity (in the spirit of Theorem \ref{main_thm})?}

\bigskip
\small

\end{document}